\theoremstyle{plain}
\newtheorem{theorem}{Theorem}[section]
\newtheorem*{nonumtheorem}{Theorem}
\newtheorem{corollary}[theorem]{Corollary}
\newtheorem{lemma}[theorem]{Lemma}
\theoremstyle{definition}
\theoremstyle{remark}
\numberwithin{equation}{section}
\numberwithin{theorem}{section}
\numberwithin{table}{section}
\numberwithin{figure}{section}
\newcommand{\C}{\mathbb C}
\newcommand{\N}{\mathbb N}
\newcommand{\R}{\mathbb R}
\newcommand{\Z}{\mathbb Z}
\renewcommand{\H}{\mathcal H}
\def\({\left(}
\def\){\right)}
\def\th{\theta}
\def\cI{\mathcal I}
\def\SL{\operatorname{SL}}
\def\PSL{\operatorname{PSL}}
\begin{document}
\title[Zeros of period polynomials lie on the unit circle]{The nontrivial zeros of period polynomials of modular forms lie on the unit circle}
\author{J. Brian Conrey, David W. Farmer, and \"Ozlem Imamo\-glu}

\thanks{
Research of the first two authors supported by the
American Institute of Mathematics
and the National Science Foundation, the research of the third author is supported by Swiss National Science Foundation}

\thispagestyle{empty}
\vspace{.5cm}
\begin{abstract}
We show that all but 5 of the zeros of the period polynomial associated to a Hecke cusp form are on the unit circle.
\end{abstract}

\address{
{\parskip 0pt
American Institute of Mathematics\endgraf
conrey@aimath.org\endgraf
farmer@aimath.org\endgraf
\null
Department of Mathematics\endgraf
ETH Zurich\endgraf
ozlem@math.ethz.ch
}
  }

\maketitle

\section{Introduction}

Let ${\mathcal M}_k(\Gamma)$ be the space of holomorphic modular forms of weight $k$ for the full modular group $\Gamma=\PSL(2,\Z)$. It is well known that ${\mathcal M}_k(\Gamma)$ has dimension $\frac{k}{12} +O(1)$ and a modular form $f\in {\mathcal M}_k$ has $\frac{k}{12} +O(1)$ inequivalent zeros in  a fundamental domain $\Gamma\backslash\H.$
The study of the natural question of the  distribution of the zeros of modular forms  dates back to the 1960's and has seen some renewed interest thanks to the recent progress on the Quantum Unique Ergodicity (QUE) conjecture.  

In the simplest case   of  Eisenstein series, it was conjectured by R.A. Rankin in 1968 and proved by F.K.C. Rankin and Swinnerton-Dyer \cite{RS-D} that all the zeros, in the standard fundamental domain, of  the series
$$E_k(z)=\frac{1}{2}\sum_{(c,d)=1} (cz+d)^{-k}$$
lie on the geodesic  arc $\{z\in\H \ : \ |z|=1, 0\leq\Re{z}\leq 1/2\}$
and as $k\rightarrow\infty$  they become uniformly distributed on this unit arc. A similar result for the cuspidal Poincare series was proved by R.A. Rankin \cite{R}. For generalizations of these results to other Fuchsian groups and  to weakly holomorphic modular functions   see   \cite{AKN}, \cite{DJ}, \cite{H}, among many others.

In contrast to these cases, for the cuspidal Hecke eigenforms, it is   a consequence of the recent proof  
  of the holomorphic Quantum Unique Ergodicity(QUE) conjecture by  Holowinsky and Soundararajan  \cite{HS}  that the zeros are uniformly distributed.
More precisely, we have 
\begin{nonumtheorem}(Holowinsky and Soundararajan \cite{HS})
Let $\{f_k\}$ be  a sequence of cuspidal Hecke eigenforms.
Then as $k\rightarrow\infty$ the zeros of $f_k$ become 
equidistributed with respect to the normalized hyperbolic measure $\frac{3}{\pi}\frac{ dx dy}{ y^2}$
\end{nonumtheorem}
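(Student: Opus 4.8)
The plan is to deduce the equidistribution of the zeros from the mass--equidistribution statement (holomorphic QUE) of Holowinsky and Soundararajan by a potential--theoretic argument. Normalize each Hecke eigenform so that $\|f_k\|_2=1$ in the Petersson inner product, and set $g_k(z)=y^k|f_k(z)|^2$, a $\Gamma$--invariant function on $\mathcal F=\Gamma\backslash\H$ with $\int_{\mathcal F}g_k\,d\mu=1$, where $d\mu=\frac{dx\,dy}{y^2}$. The input I assume is QUE in the form $g_k\,d\mu\rightharpoonup\frac{3}{\pi}\,d\mu$ weakly on $\mathcal F$, which is exactly the content of \cite{HS}.

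First I would set $u_k=\tfrac12\log g_k$, which is $\Gamma$--invariant and satisfies, in the sense of distributions, $\Delta u_k=-\frac{k}{2y^2}+2\pi\sum_\rho\delta_\rho$, where $\Delta=\partial_x^2+\partial_y^2$ and the sum runs over the zeros of $f_k$ counted with multiplicity, of which there are $N_k\sim k/12$ by the valence formula. For a test function $\phi\in C_c^\infty(\mathcal F)$, Green's theorem---whose boundary contributions cancel because both $u_k$ and $\phi$ are $\Gamma$--invariant---yields the exact identity
\[
\frac{1}{N_k}\sum_\rho\phi(\rho)=\frac{3}{\pi}\int_{\mathcal F}\phi\,d\mu+\frac{1}{2\pi N_k}\int_{\mathcal F}u_k\,\Delta_{\mathrm{hyp}}\phi\,d\mu ,
\]
where $\Delta_{\mathrm{hyp}}=y^2\Delta$ is bounded on the support of $\phi$. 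Since $N_k\sim k/12$, the theorem reduces to the estimate $\frac1k\int_K|\log g_k|\,d\mu\to0$ on every compact $K\subset\mathcal F$.

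The positive part is immediate: from $\log_+t\le t$ and $\int_{\mathcal F}g_k\,d\mu=1$ one gets $\int_K(\log g_k)_+\,d\mu\le1$. The heart of the matter is the negative part, where $\log g_k$ has the integrable logarithmic singularities $\sim 2\log|z-\rho|$ coming from the zeros; a naive bound only gives $O(k)$, the same size as $N_k$, so genuine input is needed to gain the extra $o(1)$. Here I would argue by compactness. Writing $v_k=\frac1k\log g_k$, the function $v_k-\log y=\frac2k\log|f_k|$ is subharmonic, and by the positive--part bound the family is locally bounded above in $L^1$; hence along any subsequence either $v_k\to-\infty$ locally uniformly---impossible since $\int g_k=1$ forces positive mass somewhere---or $v_k\to v_\infty$ in $L^1_{\mathrm{loc}}$. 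I would then pin down $v_\infty\equiv0$: if $v_\infty>0$ on a set of positive measure then $g_k=e^{kv_k}\to\infty$ there, contradicting $\int g_k=1$; and if $v_\infty<0$ on a set $A$ of positive measure then $g_k\to0$ on $A$, forcing the $g_k\,d\mu$--mass of $A$ to vanish and contradicting QUE, which assigns $A$ the positive mass $\frac{3}{\pi}\mu(A)$. Uniqueness of the limit then gives $v_k\to0$ in $L^1_{\mathrm{loc}}$, i.e.\ $\frac1k\int_K|\log g_k|\,d\mu\to0$, and the displayed identity finishes the proof; the absence of escaping mass near the cusp is clear because $f_k(z)\sim a_1e^{2\pi iz}$ has no zeros for $y$ large.

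The main obstacle is exactly this last step, and within it the lower bound $v_\infty\ge0$: this is where the arithmetic enters, since it is QUE---the full--support positivity of the limiting mass---that prevents the zeros of $f_k$ from clustering and thereby keeps $\log g_k$ from being too negative on a macroscopic set. Turning the pointwise dichotomy for $v_\infty$ into honest $L^1$ control requires a mild uniform--integrability input for the densities $g_k$ (for instance a sup--norm bound $\|g_k\|_\infty\ll_\varepsilon k^{1/2+\varepsilon}$), so that a.e.\ smallness of $g_k$ on $A$ upgrades to smallness of its mass. This, together with the subharmonic compactness, is the technical crux, whereas the potential--theoretic reduction and the upper bound are soft.
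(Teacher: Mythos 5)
The first thing to note is that the paper contains no proof of this statement: it is quoted as a known theorem, namely the mass-equidistribution (holomorphic QUE) theorem of Holowinsky--Soundararajan \cite{HS} combined with the observation, attributed in the paper to Nonnenmacher--Voros \cite{NV}, Shiffman--Zelditch \cite{SZ} and Rudnick \cite{Rud}, that mass equidistribution implies equidistribution of zeros. So there is no in-paper argument to compare against; what you have written is essentially a reconstruction of Rudnick's potential-theoretic deduction. Your architecture is the right one and matches that literature: the Poincar\'e--Lelong identity $\Delta u_k=-\frac{k}{2y^2}+2\pi\sum_\rho\delta_\rho$ for $u_k=\frac12\log(y^k|f_k|^2)$, the Green's-identity reduction to showing $\frac1k\log(y^k|f_k|^2)\to 0$ in $L^1_{\mathrm{loc}}$, the soft bound on the positive part via $\log_+ t\le t$ and $\int g_k\,d\mu=1$, and the identification of the subharmonic limit using QUE. (One harmless slip: your displayed identity is exact with coefficient $\frac{k}{4\pi N_k}$, not $\frac{3}{\pi}$; since $N_k=\frac{k}{12}+O(1)$ the difference is $O(1/k)$. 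The cusp remark is also fine, and in fact unnecessary: since the zero-counting measures and the limit $\frac3\pi\,d\mu$ are all probability measures, vague convergence against $C_c^\infty$ already forbids escape of mass.)

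The one genuine gap is in the step you yourself flag as the crux, and your proposed patch does not close it. To rule out $v_\infty<0$ on a set $A$ of positive measure, you want to convert $L^1_{\mathrm{loc}}$ convergence $v_k\to v_\infty$ into the statement that the QUE mass $\int_A g_k\,d\mu$ tends to $0$, and you propose doing this with a.e.\ smallness plus a sup-norm bound $\|g_k\|_\infty\ll_\varepsilon k^{1/2+\varepsilon}$. But $L^1$ convergence only gives convergence in measure at an uncontrolled rate: on the exceptional set $E_k=\{z\in A: v_k(z)>-\delta/2\}$ you can only say $\mu(E_k)\to 0$, with no rate, while your bound on $\int_{E_k}g_k\,d\mu$ is $k^{1/2+\varepsilon}\mu(E_k)$, which need not tend to $0$. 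No polynomial sup-norm bound can repair this by itself. The correct (and standard) tool is Hartogs' lemma for subharmonic functions: since the functions $v_k-\log y=\frac2k\log|f_k|$ are subharmonic and converge in $L^1_{\mathrm{loc}}$, upper bounds transfer \emph{locally uniformly}, so on any compact $K$ in the interior of $\{v_\infty\le-\delta\}$ one gets $v_k\le-\delta/2$ for all large $k$, hence $g_k\le e^{-k\delta/2}$ uniformly on $K$, and the contradiction with QUE (which assigns $K$ mass $\ge\frac3\pi\mu(K)>0$) is immediate --- with no arithmetic sup-norm input at all. Relatedly, the direction $v_\infty\le 0$ is soft: the sub-mean-value property together with $\int_K(v_k)_+\,d\mu\le\frac1k\int_K g_k\,d\mu\to0$ already gives a locally uniform upper bound tending to $0$, so your Fatou-type case $v_\infty>0$ never arises. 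With Hartogs' lemma substituted for the uniform-integrability patch, your proof is the correct one from \cite{Rud} and \cite{SZ}.
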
  

For some recent work on the zeros of holomorphic Hecke cusp forms that lie on the geodesic segments of the standard fundamental domain see \cite{GS}.

In this note we turn our attention from the zeros of modular forms to the zeros of their  period polynomials.   

It is well known that  $\Gamma$ is generated by the elliptic
transformations $S=\pm \left(\begin{smallmatrix}
                 0 & 1\\
                 -1 & 0
\end{smallmatrix}\right)$ and $U=\pm \left(\begin{smallmatrix}
                 1 & -1\\
                 1 & 0
\end{smallmatrix}\right)$
    with the defining relations $S^2=U^3=\pm1.$
  
Let   $P_{k-2}$ be the space of all complex polynomials of degree at most $ k-2$.
For $p(z) \in P_{k-2}$,    $A\in \PSL(2,\C)$ acts on $p(z)$ in the usual way via 
 $$ (p\vert A)(z) :=(cz+d)^{k-2}p\left(\frac{az+b}{cz+d}\right)$$       
Let  $P^-_{k-2}$ be the space of odd polynomials of degree $k-2$ and 
$$
 W^-= W^-_{k-2}=\{p \in P^-_{k-2}; p\vert(1+S)=
p\vert(1+U+U^2)=0\}.$$

For $f(z)=\sum_{n=1}^\infty a(n)e^{2\pi inz}$  a Hecke eigenform of even integral weight $k =w+2$ and level~1,  let  $L_f(s)=\sum_{n=1}^\infty a(n)n^{-s}$
be its associated $L$-function. 
The odd period polynomial for $f$ is defined by
\begin{equation}
r_f^-(X):= \sum_{n=1\atop n \mathrm{odd}}^{w-1}(-1)^{\frac{n-1}2}{w\choose n} n! (2\pi)^{-n-1}L_f(n+1)X^{w-n}.
\end{equation}

The basic result of Eichler Shimura theory is

\begin{nonumtheorem}[Eichler-Shimura] Let $\mathcal S_k(\Gamma)$ be the space of cusp form for $\Gamma$. Then the map 

\begin{align*}r^-: \mathcal S_k(\Gamma)&\rightarrow W^-\\
f &\rightarrow r_f^-(X)
\end{align*} 
 is an isomorphism. 
 
\end{nonumtheorem}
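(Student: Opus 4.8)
The plan is to recognize $r_f^-$ as the odd part of the full \emph{period polynomial} of $f$, to verify directly that this polynomial satisfies the two functional equations cutting out $W^-$, and then to obtain bijectivity from Haberland's nondegeneracy pairing together with a dimension count.

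First I would introduce the period polynomial
\begin{equation*}
r_f(X)=\int_0^{i\infty} f(z)\,(X-z)^{w}\,dz,
\end{equation*}
the integral converging because $f$ is a cusp form. Expanding $(X-z)^w$ binomially and inserting the Mellin representation $\int_0^\infty f(iy)\,y^{s-1}\,dy=(2\pi)^{-s}\Gamma(s)L_f(s)$, one evaluates the coefficient of $X^{w-n}$ as a constant multiple of $L_f(n+1)$; the terms with $n$ odd carry real coefficients and reproduce exactly the polynomial $r_f^-(X)$ of the statement, while the terms with $n$ even are purely imaginary. Thus $r_f^-$ is the odd part (in $X$) of $r_f$, and $f\mapsto r_f^-$ is a well-defined $\C$-linear map into $P^-_{k-2}$.

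Next I would check the two Eichler--Shimura relations $r_f\vert(1+S)=0$ and $r_f\vert(1+U+U^2)=0$. The first comes from the substitution $z\mapsto -1/z$ together with $f(-1/z)=z^{k}f(z)$ and $S^2=1$; the second is the three-term relation obtained by cutting the path as $\int_0^{i\infty}=\int_0^{1}+\int_1^{i\infty}$ and transporting the two pieces by $U$ and $U^2$, using $U^3=1$. To pass from $r_f$ to its odd part I would note that the solution space $W$ of these two relations is stable under the parity involution $\ve\colon p(X)\mapsto p(-X)$: parity commutes with $S$, and although it conjugates $U$ to $U'=\ve U\ve^{-1}$, one has $S U' S^{-1}=U^2$, so the $U$-relation for $p\vert\ve$ follows from the $U$- and $S$-relations for $p$. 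Hence $W=W^+\oplus W^-$ and $r_f^-\in W^-$.

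For injectivity I would first use the Eichler integral $\tilde f(z)=\int_z^{i\infty}f(\tau)(\tau-z)^w\,d\tau$: by Bol's identity $(d/dz)^{w+1}$ recovers a nonzero multiple of $f$, while the relations above say that the obstruction to $\tilde f$ transforming as a weight $-w$ modular form is precisely the cocycle built from $r_f$; were this cocycle a coboundary, $\tilde f$ would agree with a polynomial and $f$ would vanish, so $f\mapsto r_f$ is injective. To descend to the odd part I would invoke Haberland's formula, which identifies the Petersson inner product with a fixed $\SL(2,\Z)$-invariant pairing of period polynomials that respects the decomposition $W=W^+\oplus W^-$, thereby isolating a copy of $\mathcal S_k$ in $W^-$. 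Surjectivity then follows formally: a direct computation of $\dim W^-_{k-2}$ from its defining relations---equivalently the odd part of the Eichler--Shimura cohomology---gives $\dim W^-_{k-2}=\dim\mathcal S_k(\Gamma)$, so an injective linear map between equidimensional spaces is an isomorphism. I expect the descent to the odd part to be the main obstacle: one must track carefully the signs and complex conjugation that distribute the data of $f$ between the even and odd parts, and confirm that the extra ``Eisenstein'' period $X^{w}-1$ accounting for the gap between $\dim W^+$ and $\dim W^-$ is even and hence invisible to $r^-$.
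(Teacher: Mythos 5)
The paper does not prove this statement at all: it is quoted as the classical Eichler--Shimura theorem (in the period-polynomial formulation going back to Eichler, Shimura, and \cite{KZ}), and the rest of the paper only uses it as background. So there is no in-paper argument to compare against; judged on its own, your sketch is the standard proof and its key steps check out. The Mellin/binomial expansion of $\int_0^{i\infty}f(z)(X-z)^w\,dz$ does produce the coefficient $(-1)^{(n-1)/2}\binom{w}{n}n!(2\pi)^{-n-1}L_f(n+1)$ on $X^{w-n}$ for $n$ odd, so the paper's $r_f^-$ is exactly the odd-degree part of the full period polynomial (note the parity decomposition is just by degree in $X$, so your real/imaginary bookkeeping, which tacitly assumes real Fourier coefficients, is harmless and in fact unnecessary for linearity in $f$); the two relations follow from $f|S=f$ and the closed path $0\to\infty\to 1\to 0$; and the parity stability of $W$ is correct as you state it, since $\ve$ commutes with $S$ in $\PSL(2,\Z)$ and $SU'S^{-1}=U^2$ holds there, so $1+U'+U'^2=S(1+U+U^2)S^{-1}$ and the $S$-relation transports the $U$-relation. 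Your identification of the crux is also right: Bol's identity only gives injectivity of the \emph{full} period map, and passing to $r^-$ genuinely requires the Haberland/Kohnen--Zagier pairing (if $r_f^-=0$, then the pairing formula makes $(g,f)$ vanish for all $g$, whence $f=0$ by nondegeneracy of the Petersson product), together with $\dim W^-=\dim\mathcal S_k(\Gamma)$ and the fact that the even element $X^w-1$ (which does lie in $W^+$, as one checks directly) absorbs the codimension-one discrepancy. The one place you must tread carefully is the dimension count: you cannot source it from ``Eichler--Shimura cohomology,'' since that is precisely the theorem being proved; instead it must come from an elementary computation, e.g.\ the Mayer--Vietoris sequence for $\PSL(2,\Z)\cong\Z/2*\Z/3$ acting on $P_{k-2}$ (reducing everything to character computations for the finite-order elements $S$ and $U$) or the direct linear-algebra count in \cite{KZ}, combined with the dimension formula for $\mathcal S_k(\Gamma)$. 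With that circularity removed, your outline is a complete and correct plan.
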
  


In the light of the Theorem of Eichler and Shimura, studying the zeros of period polynomials is as natural as studying the zeros of modular forms. In this paper we prove 

\begin{theorem}\label{mainth}
If $f$ is a Hecke eigenform, then the odd period polynomials $r_f^-(X)$ have simple zeros at $0,\pm 2$,and $\pm 1/2$ and double zeros at $\pm 1$. The rest of its zeros are complex
numbers on the unit circle.
\end{theorem}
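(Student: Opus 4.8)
The plan is to combine the algebraic symmetries forced by the cocycle relations with a quantitative sign-change count on the unit circle. First I would record the two symmetries of $r_f^-$. Oddness is immediate from the definition, and the coefficients are real because $L_f(n+1)$ is the value at a real point of a Dirichlet series with real coefficients. Since $k-2=w$ is even, the relation $r_f^-\vert(1+S)=0$ unwinds to the self-inversive identity $X^w r_f^-(1/X)=r_f^-(X)$; together with real coefficients this forces any zero off both the unit circle and the real axis to occur in a quadruple $\rho,\,1/\rho,\,\bar\rho,\,1/\bar\rho$, so it suffices to locate the zeros on $|X|=1$ and on $\R$.

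Next I would extract the trivial zeros directly from the three-term relation $r_f^-\vert(1+U+U^2)=0$, which reads $p(X)+X^w p(1-1/X)+(X-1)^w p(-1/(X-1))=0$. The factor $X$ divides $r_f^-$, so $r_f^-(0)=0$, and it is simple because the coefficient of $X$ is a nonzero multiple of $L_f(w)$ (with $w=k-2>(k+1)/2$ lying in the region of absolute convergence, where the Euler product is nonvanishing). Evaluating the three-term relation at $X=1$ then gives $r_f^-(1)=-r_f^-(0)=0$, while evaluating at $X=2$ and invoking the self-inversive identity $2^w r_f^-(1/2)=r_f^-(2)$ gives $2\,r_f^-(2)=0$; oddness supplies the companion zeros at $-1,\pm2,-1/2$. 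Differentiating the three-term relation at $X=1$ and using the zeros already found promotes $\pm1$ to double zeros, the exact multiplicities following from nonvanishing of the relevant critical values. This yields the factorization $r_f^-(X)=X(X^2-1)^2(X^2-4)(X^2-1/4)\,g(X)$ with $g$ even, real, and self-inversive of degree $w-10$.

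The heart of the matter is to show that the $w-10$ zeros of $g$ all lie on the unit circle. Writing $\Lambda_f(s)=(2\pi)^{-s}\Gamma(s)L_f(s)=\int_0^\infty f(iy)y^{s-1}\,dy$, the coefficients of $r_f^-$ become $(-1)^{(n-1)/2}\binom{w}{n}\Lambda_f(n+1)$, and summing the resulting binomial series under the integral produces the representation
\[ r_f^-(X)=\int_0^\infty f(iy)\,\frac{(X+iy)^w-(X-iy)^w}{2i}\,dy. \]
I would then set $X=e^{i\theta}$ and study the real-valued function $G(\theta):=e^{-iw\theta/2}r_f^-(e^{i\theta})$ (real by the self-inversive identity and real coefficients), divided by the real values on the circle of the already-extracted factors; its sign changes in $(0,\pi)$ are exactly the unit-circle zeros of $g$ there. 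Because $\deg g=w-10$ caps the number of such zeros at $w-10$, it suffices to exhibit $(w-10)/2$ sign changes of $G$ in $(0,\pi)$: a matching lower and upper bound forces every remaining zero onto the circle and shows each is simple.

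To produce these sign changes I would approximate $f(iy)$ by its leading Fourier term $e^{-2\pi y}$, for which the integral evaluates to an explicit model polynomial that can be analyzed on $|X|=1$ in the style of Rankin and Swinnerton-Dyer: after the twist by $e^{-iw\theta/2}$ the model contributes a dominant oscillating term of frequency $\asymp w/2$, behaving like a multiple of $\cos(\tfrac{w}{2}\theta+\varphi)$, which alone supplies the required complement of sign changes. The remaining, and genuinely hardest, step is the perturbation estimate: one must bound the contribution of $\sum_{m\ge2}a(m)e^{-2\pi m y}$ to $G(\theta)$ uniformly in $\theta$ and show it cannot cancel the model's sign at the chosen sample points. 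Here Deligne's bound $|a(m)|\le d(m)\,m^{(k-1)/2}$ must be played against the exponential decay $e^{-2\pi m y}$ and the decay of the kernel, and the delicate region is near the band edges $\theta\to0,\pi$ where the model term is itself small. Controlling the error uniformly there, so that all $(w-10)/2$ sign changes survive, is the main obstacle of the proof.
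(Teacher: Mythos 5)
Your extraction of the trivial zeros is essentially the paper's own Lemma: substituting $x=1$, $x=-1$, $x=2$ into the two cocycle relations and differentiating the $S$-relation at $x=1$ is exactly what is done there, and that part is fine. But the heart of your argument contains a genuine gap, and it is not the routine ``remaining estimate'' you present it as: the step would fail as formulated. You propose to approximate $f(iy)$ by its leading Fourier term $e^{-2\pi y}$ inside the period integral and treat the rest as a pointwise-small perturbation. Under Deligne's bound $|a(m)|\le d(m)m^{(k-1)/2}$ the tail is \emph{not} small relative to the first term except for very large $y$: already $|a(2)|e^{-4\pi y}$ may exceed $e^{-2\pi y}$ for all $y\lesssim (k+1)\log 2/(2\pi)$, and near $y=0$ the Fourier expansion gives no control at all. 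Folding the integral to $y\ge 1$ by modularity (which is what produces the self-inversive split $r_f^-=q+X^wq(1/X)$) does not cure this, since first-term domination only sets in for $y\gg k$. So the ``model $\cos(\frac{w}{2}\theta+\varphi)$ plus uniformly small error'' framework for counting sign changes of $G(\theta)$ cannot be run in this form, and you have no device to rescue the band-edge arcs $\theta\to 0,\pi$ where even the model degenerates after dividing out $(X^2-1)^2$.

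The paper circumvents both difficulties by working coefficient-wise with \emph{completed} $L$-values rather than with the integral: the normalized polynomial has coefficients $(-1)^m(2\pi X)^{2m+1}L_f(w-2m)/(2m+1)!$, so the Gamma factor has already absorbed the archimedean size; Deligne gives $L_f(w-2m)=1+O(2^{-k/4})$ for $m\le w/8$, and the factorial decay annihilates the range $m>w/8$ on the annulus $4/5\le|z|\le 5/4$, yielding $|q_f(z)-\sin 2\pi z|\le 1/100$ (Lemma \ref{lemma:est2}). Crucially, instead of demanding pointwise domination of an oscillatory main term, Lemma \ref{gen-lemma} reduces the zero count to showing that $\Im q_f(e^{i\theta})$ vanishes rarely, which is established \emph{globally} by Rouch\'e against $S(z)=\sin 2\pi z-\sin 2\pi/z$ in the annulus (exactly $10$ zeros, Lemma \ref{lemma:S}); each of the $\ge N-10$ arcs on which $\Im q_f\neq 0$ then carries a zero of $p_f^-$ via the equation $-\tan M\theta=R(\theta)/I(\theta)$, and the degree count closes the argument. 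Two further points: any large-weight argument, yours included, must be supplemented by a finite check (the paper verifies $k<80$ numerically), which your plan omits; and your claim that the exact multiplicities at the trivial points ``follow from nonvanishing of the relevant critical values'' is unsupported --- in both approaches exactness instead falls out of the saturation of the degree by the zeros already located.
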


Figure~\ref{fig:hecke} illustrates Theorem~\ref{mainth} in the case
of $f$ a cusp form of weight $w=34$.  Note that in this 
example the spacing between zeros is quite regular.  
From the proof of the Theorem \ref{mainth} it will become clear that
this is a general phenomenon.
It is worth noting that for an arbitrary cusp form which is not a Hecke eigenform the zeros of $r_f^-(X)$ need not be on the unit circle. This can be thought as analogous to the fact that
  for a general modular form $f$ which is not a Hecke eigenform, the distribution of zeros of $f$ need not be uniformly distributed.

\begin{figure}[htp]
\begin{center}
\scalebox{1.0}[1.0]{\includegraphics{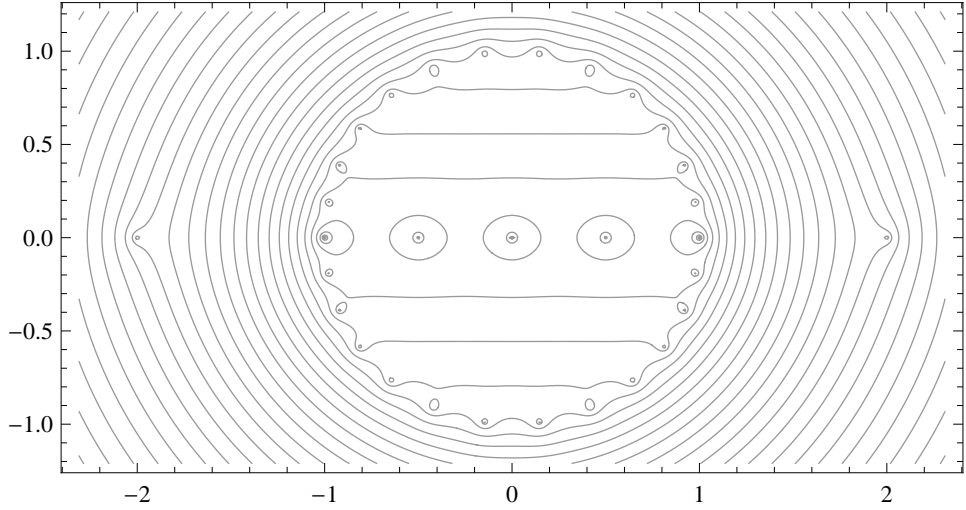}}
\caption{\sf
A contour plot of $\log|r_f^-(z))$ for $f$ one of the cusp forms of weight
$w=34$, illustrating the zeros at $\pm 2$, $\pm \frac12$, and $0$,
with the remaining zeros on the unit circle.
} \label{fig:hecke}
\end{center}
\end{figure}

    In the case of zeros of modular forms, the uniform distribution result is a remarkable consequence of the deep QUE conjecture, which is now a theorem due to Holowinsky and Soundararajan for holomorphic eigenforms. 
    The fact that the uniform distribution of the zeros of modular forms follows from the QUE conjecture 
was first observed by  S. Nonnenmacher and A. Voros \cite{NV}, B. Shiffman and S. Zelditch \cite{SZ} and Z. Rudnick  \cite{Rud}.
 
  In the case of the zeros of period polynomials, as we will show in the next section, Theorem \ref{mainth} follows using simple function theory arguments together with the deep   theorem of Deligne which is the Petersson-Ramanujan conjecture in the case of holomorphic cuspforms.
  
  Finally it is worth noting that the  proof of our Theorem \ref{mainth} can be applied without much difficulty to show that the zeros of       some special period polynomials are also on the unit circle. More precisely these are the polynomials associated to the cusp forms $R_n(z)$, $0\leq n\leq w=k-2$  characterized by the property 
  $$r_n(f):= n!(2\pi)^{n-1}L(f,n+1) = (f, R_n), \,\,\, \forall f\in S_k(\Gamma).$$
  Here  $(f, R_n)$ is the Petersson inner product of $f$ and $R_n$. 
  $R_n(z)$ has the following Poincare type series representation, due to H.Cohen \cite{C}.
  For $0<n<w$,  
 ${\tilde n}= w-n$ and $c_{k,n}=i^{{\tilde n}+1}2^{-w}{w\choose n}\pi$,   we have 
      $$ R_n(z)=c_{k,n}^{-1}\sum_{ \left(\begin{smallmatrix}
                  a & b\\
                 c & d
 \end{smallmatrix}\right)\in\Gamma} (az+b)^{-n-1}(cz+d)^{-{\tilde n}-1}
$$
  
  A special case of   Theorem 1 in \cite{KZ}     gives that   the odd period polynomial of $R_n$ for $n$ even and $0<n<w$ is given by the Bernoulli type polynomial 
  \begin{equation}\label{bernoulli}
  (-1)^{k/2+n/2}2^{-w}r^-_{R_n}(X)= \left[\frac{B^0_{\tilde n+1}(X)}{\tilde n+1}-\frac{B^0_{n+1}(X)}{n+1}\right] \mid (I-S)
  \end{equation}
  where 
  $$B^0_{n+1}(X)=\sum_{\substack{i=0\\ i\neq 1}}^{n+1}{n\choose i}B_iX^{n+1}.$$
  
  The polynomials in (\ref{bernoulli}) can also be  closely approximated by 
   $\sin(2\pi x) +x^N \sin(2\pi/x)$ which then can be used to show that their non-trivial zeros are on the unit circle. The period polynomials of  the cusp forms $R_n$ can be seen as complementary to  the Ramanujan polynomials which can be thought in terms of the period polynomials of the Eisenstein series. (see \cite{KZ} and \cite{F}) Recently, it was shown by R. Murty, C. Smyth, and R. Wang \cite{MSW} that  the zeros of the Ramanujan polynomials   also   lie on the unit circle.  In this context see also \cite{GMR}.

\section{Period polynomial of Hecke eigenforms}

For  $f(z)=\sum_{n=1}^\infty a(n)e^{2\pi inz}\in \mathcal S_k(\Gamma)$,  a Hecke eigenform, we   let  $L_f(s)=\sum_{n=1}^\infty a(n)n^{-s}$
be its associated $L$-function and   
\begin{equation}
r_f^-(X):= \sum_{n=1\atop n \mathrm{odd}}^{w-1}(-1)^{\frac{n-1}2}{w\choose n} n! (2\pi)^{-n-1}L_f(n+1)X^{w-n}.
\end{equation}
the odd part of its period polynomial. 

The L-function satisfies the functional equation
$$
(2\pi)^{-s}\Gamma(s)L_f(s)=(-1)^{k/2}(2\pi)^{s-k}\Gamma(k-s)L_f(k-s).
$$
It follows from the functional equation that  $r_f^-(X)$ is self-reciprocal, i.e.
\begin{eqnarray} \label{eqn:rec}
r_f^-(X) =X^{w}r_f^-(1/X)
\end{eqnarray}
and it follows from the modularity of $f$ (specifically that $f\big(\frac{z-1}{z}\big)=z^kf(z)$) that
\begin{equation}
 r_f^-(X)+X^w r_f^-\big(1-\frac 1X\big) +(X-1)^w r_f^-\big(\frac{-1}{X-1}\big)=0.
 \end{equation}
By Eichler Shimura theory  the vector space   of polynomials of degree less than or equal to $k-2$ spanned by the set of $r_f^-(X)$
as $f$ runs through Hecke eigenforms of weight $k$  is precisely the space of odd polynomials $P$ of degree $\le k-3$
for which
\begin{equation}
\label{eqn:per1} P(x)+x^{k-2}P\big(\frac {-1}x\big)\equiv 0 
\end{equation}
and
\begin{equation}
\label{eqn:per2} 
 P(x)+x^{k-2}P\big(1-\frac 1x\big) +(x-1)^{k-2}P\big(\frac{-1}{x-1}\big)\equiv0.\end{equation}

\begin{lemma} The polynomial $p$ in Theorem \ref{mainth} has
``trivial zeros'' at $\pm 2$, $\pm \frac12$, and $0$.
\end{lemma}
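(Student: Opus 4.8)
The plan is to read all five zeros off the two functional equations \eqref{eqn:rec} and \eqref{eqn:per2}, together with the structural fact that $P:=r_f^-$ is an odd polynomial whose monomials are $X, X^3,\dots,X^{w-1}$. Since every term carries a factor of $X$ we get $P(0)=0$ at once, and oddness gives $P(-X)=-P(X)$, so it suffices to exhibit the zeros at $2$ and $1/2$; those at $-2$ and $-1/2$ then follow automatically. Moreover, evaluating the self-reciprocity \eqref{eqn:rec} at $X=2$ gives $P(2)=2^{w}P(1/2)$, so the vanishings $P(2)=0$ and $P(1/2)=0$ are equivalent. Thus the whole lemma reduces to the single identity $P(2)=0$.

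First I would establish the auxiliary vanishing $P(1)=0$. Because $\deg P=w-1<w$ and $P$ has no constant term, both slashed terms in \eqref{eqn:per2} are genuine polynomials: writing $P(y)=\sum_j a_j y^{j}$ and $1-\tfrac1x=\tfrac{x-1}{x}$, one finds $x^{w}P\big(1-\tfrac1x\big)=\sum_j a_j (x-1)^{j}x^{w-j}$ and $(x-1)^{w}P\big(\tfrac{-1}{x-1}\big)=\sum_j a_j(-1)^{j}(x-1)^{w-j}$, every exponent of $(x-1)$ being at least $1$. Hence \eqref{eqn:per2} is an honest polynomial identity, and evaluating it at $x=1$ annihilates both of these terms, leaving $P(1)=0$.

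Next I would evaluate \eqref{eqn:per2} at $x=2$. The three arguments become $2$, $1-\tfrac12=\tfrac12$, and $\tfrac{-1}{2-1}=-1$, so the relation reads $P(2)+2^{w}P\big(\tfrac12\big)+P(-1)=0$. Substituting $2^{w}P(\tfrac12)=P(2)$ from \eqref{eqn:rec} and $P(-1)=-P(1)=0$ from oddness and the previous step gives $2P(2)=0$, i.e.\ $P(2)=0$. Self-reciprocity then yields $P(1/2)=0$, oddness yields $P(-2)=P(-1/2)=0$, and together with $P(0)=0$ this is exactly the asserted list of five trivial zeros.

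I expect the only point requiring care to be the justification that \eqref{eqn:per2} may legitimately be evaluated at $x=1$, where the factor $(x-1)^{w}$ multiplies the pole of $P\big(\tfrac{-1}{x-1}\big)$ at infinity. This is settled by the bookkeeping above: since $\deg P<w$, clearing denominators turns each slashed term into a polynomial divisible by $(x-1)$, so no genuine singularity occurs and the value at $x=1$ really is $0$. Everything else is a direct substitution, using only that $w=k-2$ is even (so that $(-2)^{w}=2^{w}$ and the self-reciprocal exponent set is preserved).
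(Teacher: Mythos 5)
Your proof is correct and takes essentially the same route as the paper: you obtain $P(1)=P(-1)=0$ by evaluating \eqref{eqn:per2} at $x=1$ (your clearing-of-denominators bookkeeping is just a rigorous version of the paper's limit observation, valid because $\deg P<w$), and your two linear relations in $P(2)$ and $P(1/2)$ --- \eqref{eqn:per2} at $x=2$ together with \eqref{eqn:rec} at $X=2$ --- are literally the same equations the paper extracts from \eqref{eqn:per2} at $x=-1$ and from \eqref{eqn:per1} at $x=2$, since \eqref{eqn:rec} and \eqref{eqn:per1} are equivalent by oddness. The only divergence is that the paper's proof of this lemma additionally verifies $P'(1)=P'(-1)=0$ by differentiating \eqref{eqn:per1} (needed later for the double zeros at $\pm 1$ in Theorem \ref{thm80}), which is not part of the statement as quoted, so your argument is complete for what is asserted.
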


\begin{proof}
Since $P$ is odd, we have $P(0)=0$ and we only have to verify that $P(1)=P'(1)=P(2)=P(1/2)=0$. We substitute $x=1$ into (\ref{eqn:per2}), noting that  
 \begin{eqnarray*}
 \lim_{x\to 1} (x-1)^{k-2}P\big(\frac{-1}{x-1}\big)=0
 \end{eqnarray*}
 since $P$ has degree smaller than $k-2$. Thus, $P(1)=0=P(-1)$.  Now we substitute $x=-1$ into (\ref{eqn:per2}) to obtain
 $$ P(2) +2^{k-2}P(1/2)=0
 $$
 while from $x=2$ in (\ref{eqn:per1}) we have
 $$P(2)-2^{k-2}P(1/2)=0.$$
 Thus, $P(1/2)=P(2)=0$.  We differentiate (\ref{eqn:per1}) to obtain
 \begin{equation*}
 P'(x)+(k-2)x^{k-3}P\big(\frac {-1}x\big)+x^{k-4}P'\big(\frac {-1}x\big)\equiv 0.
\end{equation*}
Substituting $x=1$ here gives
$$ P'(1)+P'(-1)=0.$$ 
 But $P$ is odd, so $P'$ is even which means that $P'(-1)=P'(1)$. Therefore, $P'(1)=P'(-1)=0$ and so we have verified that all of the trivial 
 zeros are where we said they would be. 
\end{proof}

To understand the rest of the zeros, we look at 
\begin{align}p_k(X):=
\frac{2\pi r_f^-(X)}{(-1)^{w/2}(2\pi)^{-w}(w-1)! }= \mathstrut&  \sum_{ n=1 \atop  n \mathrm{odd}}^{w-1}(-1)^{\frac{n-1}2} \frac{(2\pi X)^n}{n!  } L_f(w-n+1) \cr
 = \mathstrut&    \sum_{m=0}^{w/2-1}  \frac{(-1)^{m}(2\pi X)^{2m+1}}{(2m+1)! } L_f(w-2m) .
\end{align}
Since $L_f(w-2m)$ is close to 1 for small values of $m$ we see that  the initial terms of the above are close to the initial terms of the series
 $$\sin(2 \pi X)=\sum_{m=0}^\infty \frac{(-1)^m (2\pi X)^{2m+1}}{(2m+1)!}.$$
 The idea now is to study the zeros of 
 $$\sin(2\pi x) +x^N \sin(2\pi/x).$$
 
It follows from (\ref{eqn:rec}) that $p_f^-(X)$ may be written as
\begin{eqnarray} \label{eqn:q}
p_f^-(X)= q_f(X)+X^{w}q_f(1/X)
\end{eqnarray}
where
\begin{eqnarray} \label{eqn:qf}
q_f(X)= \sum_{m=0}^{[(w-6)/4]}  \frac{(-1)^{m}(2\pi X)^{2m+1}}{(2m+1)! } L_f(w-2m) +\frac{L_f\big(\frac{w+2}{2}\big) (2\pi X)^{\frac{w}2}}{2\big(\frac{w}{2}\big)!}.
\end{eqnarray}
 Note that when $k\equiv 2 \bmod 4$ the last term doesn't appear, since in this case the functional equation implies that $L_f(k/2)=0$. 
 Note also that $q_f$ and $r_f^-$ have real coefficients, since $L_f(s)$ is real on the real axis.
 \bigskip 
 
To prove that the non trivial zeros of $r_f^-(X)$ are on the unit circle we need several lemmas. 
First we   can replace $\sin 2\pi z$ above by an entire function $r(z)$. The crucial idea is the following lemma.

 \begin{lemma} \label{gen-lemma}Let  $r(z)$ be an entire function  and for $N\in \N$ let
 $$F_N(z):=r(z)+z^Nr(z^{-1}).$$
 Let  
 $$R(\th):=\Re{r(e^{i\th})} \qquad \mathrm{and} \qquad I(\th):=\Im{r(e^{i\th})}.$$
  For $j=0, \ldots 2M-1$
 let $\cI_j$ denote the interval $[\frac{\pi}{2M}+\frac{\pi}{M}j, \frac{\pi}{2M}+\frac{\pi}{M}(j+1)]\subset \R$.
 Then 
\begin{enumerate}
\item For  $\th_j=\frac{\pi}{2M}+\frac{\pi}{M}j$,  if $I(\th_j)=0$,  then   $F_{2M}(e^{i\th_j})=0$. 
\item If $I(\th)\neq 0$ for $\th\in \cI_j$, then $F_{2M}((e^{i\th})=0$ for some $\th\in \cI_j$.
\end{enumerate}
   \end{lemma}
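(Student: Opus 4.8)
The plan is to restrict $F_{2M}$ to the unit circle and collapse the complex equation $F_{2M}(e^{i\th})=0$ into a single real trigonometric equation. The one structural input I would use is that $r$ has real Taylor coefficients; this holds in every case of interest here, since (as noted above) $q_f$ and $r_f^-$ have real coefficients and $r$ plays the role of $\sin 2\pi z$. Reality gives $r(e^{-i\th})=\overline{r(e^{i\th})}=R(\th)-iI(\th)$. Writing $z=e^{i\th}$, so that $z^{2M}=e^{2Mi\th}$ and $z^{-1}=e^{-i\th}$, and collecting the $R$ and $I$ contributions, I would obtain
$$F_{2M}(e^{i\th})=R(\th)\bigl(1+e^{2Mi\th}\bigr)+iI(\th)\bigl(1-e^{2Mi\th}\bigr).$$
Using the elementary identities $1+e^{2Mi\th}=2e^{Mi\th}\cos(M\th)$ and $1-e^{2Mi\th}=-2ie^{Mi\th}\sin(M\th)$, this factors as
$$F_{2M}(e^{i\th})=2e^{Mi\th}\bigl[R(\th)\cos(M\th)+I(\th)\sin(M\th)\bigr].$$
Since the prefactor $2e^{Mi\th}$ never vanishes, the zeros of $F_{2M}$ on the unit circle are exactly the zeros of the real-valued continuous function $G(\th):=R(\th)\cos(M\th)+I(\th)\sin(M\th)$. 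This identity is the heart of the lemma; both conclusions fall out of it by inspecting $G$ at the mesh points $\th_j$.

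For part (1), I would evaluate at $\th_j=\frac{\pi}{2M}+\frac{\pi}{M}j$, where $M\th_j=\frac{\pi}{2}+\pi j$ is an odd multiple of $\pi/2$. Hence $\cos(M\th_j)=0$ and $\sin(M\th_j)=(-1)^j=\pm1$, so $G(\th_j)=\pm I(\th_j)$. Thus $I(\th_j)=0$ immediately forces $G(\th_j)=0$, i.e.\ $F_{2M}(e^{i\th_j})=0$, which is exactly the claim.

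For part (2), I would apply the Intermediate Value Theorem to $G$ on $\cI_j=[\th_j,\th_{j+1}]$. Both endpoints are mesh points, so $\cos(M\th_j)=\cos(M\th_{j+1})=0$; moreover $M\th_{j+1}=M\th_j+\pi$ gives $\sin(M\th_{j+1})=-\sin(M\th_j)$. Therefore
$$G(\th_j)=\sin(M\th_j)\,I(\th_j),\qquad G(\th_{j+1})=-\sin(M\th_j)\,I(\th_{j+1}).$$
If $I(\th)\neq0$ for all $\th\in\cI_j$, then $I$, being continuous, keeps a constant sign on $\cI_j$, so $I(\th_j)$ and $I(\th_{j+1})$ agree in sign, and $G(\th_j)$, $G(\th_{j+1})$ therefore have opposite signs. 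By the Intermediate Value Theorem there is a $\th\in\cI_j$ with $G(\th)=0$, hence $F_{2M}(e^{i\th})=0$, as claimed.

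I do not expect a serious obstacle: once the factorization is in hand, both parts are one-line sign arguments. The only genuinely essential point is the reality of the coefficients of $r$, which is what converts the complex condition $F_{2M}=0$ into the single real condition $G=0$; the deliberate choice of the mesh $\th_j$ as the zeros of $\cos(M\th)$ is what makes $G(\th_j)$ reduce to $\pm I(\th_j)$ and what makes $\sin(M\th)$ flip sign across consecutive mesh points, so that parts (1) and (2) together produce a zero of $F_{2M}$ on the unit circle associated to each interval $\cI_j$.
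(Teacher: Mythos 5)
Your proof is correct and takes essentially the same route as the paper: the paper multiplies $F_{2M}$ by $z^{-M}$ to get the real-valued function $2\bigl(R(\th)\cos(M\th)+I(\th)\sin(M\th)\bigr)$ on $|z|=1$ --- exactly your $G(\th)$ up to the nonvanishing factor $2e^{iM\th}$ --- and then runs the same intermediate value argument, phrased there as solving $-\tan(M\th)=R(\th)/I(\th)$ on $\cI_j$ rather than via your endpoint sign flip. Your one genuine improvement is making explicit the hypothesis that $r$ has real coefficients, which the paper uses silently in asserting that $z^{-M}F_{2M}(z)$ is real on the unit circle, and which is indeed satisfied by $q_f$ in the application.
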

   \begin{proof}
   Let
   $$f_N(z)=z^{-N/2}F_N(z)=z^{-M}r(z)+z^Mr(z^{-1}),$$
where $2M=N$. Note that $f_N$ and $F_N$ have the same zeros on $|z|=1$. Since $f_N$ is real when $|z|=1$, it suffices to look at the real valued  function
   \begin{equation}\label{re-part}
   \Re f_N(e^{i\th})= 2\cos(M\th)R(\th)+2\sin(M\th)I(\th)
   \end{equation}
   Using $(\ref{re-part})$,  part $a)$ of the Lemma is clear.

   To see part $b)$ note that If $I(\th)\neq 0$ then $\Re f_N(e^{i\th})=0$ will have a solution when
   \begin{equation}\label{tan-eq}
   -\tan{M\th}=\frac{R(\th)}{I(\th)}
   \end{equation}
   If $I(\th)\neq 0$  for $\th\in\cI_j$ then in the interval $\cI_j$ the function $R(\th)/I(\th)$ is bounded and continuous and hence $(\ref{tan-eq})$ will have a solution.
   \end{proof}

\begin{lemma} \label{lemma:S} Let
$$S(z)=\sin(2\pi z)-\sin(2 \pi /z).$$
Then $S(z)$ has precisely 10 zeros in the annulus $A:=\{z:4/5\le |z|\le 5/4\}$.
Moreover, on the boundary of the annulus,
$|S(z)| > 1.$
\end{lemma}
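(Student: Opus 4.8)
The plan is to exploit two symmetries of $S$, locate ten zeros on the unit circle explicitly, and then use the argument principle together with Rouch\'e's theorem to show there are no others in the annulus. First I would record the symmetries. Since $\sin$ has real Taylor coefficients, $S(\bar z)=\overline{S(z)}$, so the zero set is symmetric about the real axis; and directly $S(1/z)=\sin(2\pi/z)-\sin(2\pi z)=-S(z)$, so the zero set is invariant under $z\mapsto 1/z$. In particular $z\mapsto 1/z$ interchanges the two boundary circles $|z|=4/5$ and $|z|=5/4$ while preserving $|S|$, so every statement proved on one circle transfers to the other and I need only work on $|z|=5/4$.

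Next I would find the zeros on $|z|=1$ by hand. Writing $z=e^{i\theta}$ and using $\sin(2\pi e^{-i\theta})=\overline{\sin(2\pi e^{i\theta})}$ gives
$$ S(e^{i\theta}) = 2i\,\Im\sin(2\pi e^{i\theta}) = 2i\,\cos(2\pi\cos\theta)\,\sinh(2\pi\sin\theta). $$
This vanishes exactly when $\sin\theta=0$ (giving $z=\pm1$) or $\cos(2\pi\cos\theta)=0$, i.e. $\cos\theta\in\{\pm\tfrac14,\pm\tfrac34\}$ (giving eight further values of $\theta$), for a total of ten zeros. Differentiating along the circle shows each is simple: at $z=\pm1$ one computes $S'=4\pi\neq0$, and at the other eight points the $\theta$-derivative of the displayed product reduces to $2\pi\sin\theta\,\sin(2\pi\cos\theta)\,\sinh(2\pi\sin\theta)$, whose three factors are all nonzero there (note $\sin(2\pi\cos\theta)=\pm1$ whenever $\cos(2\pi\cos\theta)=0$).

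For the count I would use the argument principle on the annulus, where $S$ is holomorphic. Once I know $S\neq0$ on the boundary (which follows from $|S|>1$), the number of zeros with multiplicity equals $\tfrac{1}{2\pi}\big[\Delta_{|z|=5/4}\arg S-\Delta_{|z|=4/5}\arg S\big]$. The substitution $z\mapsto 1/z$, together with $S(1/z)=-S(z)$, converts a counterclockwise traversal of the inner circle into a clockwise traversal of the outer circle (a constant $+\pi$ shift in $\arg$ being irrelevant to the change), whence $\Delta_{|z|=4/5}\arg S=-\Delta_{|z|=5/4}\arg S$ and the count collapses to $\tfrac1\pi\Delta_{|z|=5/4}\arg S$. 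To evaluate the outer argument change I would compare $S$ with $\sin(2\pi z)$ by Rouch\'e: if $|\sin(2\pi/z)|<|\sin(2\pi z)|$ on $|z|=5/4$, then $\Delta_{|z|=5/4}\arg S=\Delta_{|z|=5/4}\arg\sin(2\pi z)=2\pi\cdot5=10\pi$, since $\sin(2\pi z)$ has exactly the five simple zeros $0,\pm\tfrac12,\pm1$ in $|z|<5/4$ (note $\sin$ also has the essential singularity at $0$, so I use the Rouch\'e boundary computation, not a naive zero count for $S$ itself). This yields precisely $10$ zeros in the annulus, and since the ten simple zeros already found on $|z|=1$ exhaust this multiplicity, none can lie off the unit circle.

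It remains to establish, on $|z|=5/4$, both the Rouch\'e inequality and the bound $|S|>1$. Using $|\sin(a+bi)|^2=\sin^2a+\sinh^2b$ with $z=\tfrac54e^{i\theta}$, the Rouch\'e inequality reads
$$ \sin^2\big(\tfrac{5\pi}{2}\cos\theta\big)+\sinh^2\big(\tfrac{5\pi}{2}\sin\theta\big) > \sin^2\big(\tfrac{8\pi}{5}\cos\theta\big)+\sinh^2\big(\tfrac{8\pi}{5}\sin\theta\big). $$
Where $|\sin\theta|$ is bounded away from $0$ this is comfortable, because $\tfrac52>\tfrac85$ forces the left $\sinh$ term to dominate; the delicate region is $\theta$ near $0$ and $\pi$, where $\sin\theta\approx0$ and one must compare $\sin^2(\tfrac{5\pi}{2}\cos\theta)$ with $\sin^2(\tfrac{8\pi}{5}\cos\theta)$ near $\cos\theta=\pm1$ (there the values are $1$ versus $\sin^2(8\pi/5)\approx0.90$, a genuine but small margin). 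The bound $|S|>1$ is subtler still near the real axis, since the triangle estimate $|S|\ge|\sin(2\pi z)|-|\sin(2\pi/z)|$ is too weak there; instead one must use that the two sines are nearly out of phase (for instance $S(5/4)=\sin(5\pi/2)-\sin(8\pi/5)\approx1.95$), so the subtraction \emph{increases} the modulus. I expect this boundary analysis near $\theta=0,\pi$ --- making the small but real margins rigorous through explicit $\sin$/$\sinh$ estimates rather than crude triangle inequalities --- to be the main obstacle; the symmetry reduction and argument-principle bookkeeping are routine once it is in hand.
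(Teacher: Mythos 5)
Your proposal is correct in structure but takes a genuinely different route from the paper: the paper's entire ``proof'' of this lemma is a computational verification --- plotting the image of $S$ on the boundary of $A$ and applying the argument principle, or reading off a contour plot (Figure~\ref{fig:lemma}) --- whereas you give an analytic skeleton that reduces the computation to almost nothing. Your reductions all check out: the symmetry $S(1/z)=-S(z)$ does collapse the annular count to $\tfrac1\pi\Delta_{|z|=5/4}\arg S$ (the orientation bookkeeping is right); comparing with $\sin(2\pi z)$ via Rouch\'e on the outer circle alone is exactly the correct device for sidestepping the essential singularity at $0$, and $\sin(2\pi z)$ indeed has the five simple zeros $0,\pm\tfrac12,\pm1$ in $|z|<\tfrac54$, giving $\Delta\arg=10\pi$ and hence the count $10$; and your explicit location of all ten zeros on $|z|=1$, at $z=\pm1$ and at $\cos\theta\in\{\pm\tfrac14,\pm\tfrac34\}$, with simplicity verified via the $\theta$-derivative, is actually \emph{stronger} than the lemma as stated (it is also consistent with, and explains, the regular spacing of zeros remarked on after Theorem~\ref{mainth}). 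What each approach buys: the paper's plot settles everything at once but leaves nothing human-checkable; your version isolates the only genuinely computational content as two single-variable inequalities on $|z|=\tfrac54$, namely $|\sin(2\pi/z)|<|\sin(2\pi z)|$ and $|S(z)|>1$, which you leave unproven but correctly diagnose (the tight spot is $\theta$ near $0,\pi$, where the Rouch\'e margin is $1-\sin^2(8\pi/5)\approx 0.095$ at $\theta=0$ and improves quadratically in $\theta$ since the $\sinh^2\big(\tfrac{5\pi}2\sin\theta\big)$ term grows faster than everything on the right; and the $|S|>1$ bound near the real axis really does require the phase-opposition observation $S(5/4)\approx 1.95$ rather than the triangle inequality, exactly as you say). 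Since both these inequalities are true with workable margins and are amenable to elementary $\sin$/$\sinh$ estimates or a finite interval check, your proposal, once those two estimates are written out, would be a more self-contained proof than the one in the paper; as it stands it is an honest and sound sketch whose remaining gap is no larger than the computational gap the paper itself accepts.
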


This lemma can be verified with the aid of a graphing program such
as Mathematica. One can plot the image of $S(z)$ on
the boundary of $A$ and use the argument principle, or view a contour plot
as shown in Figure~\ref{fig:lemma}.

\begin{figure}[htp]
\begin{center}
\scalebox{0.8}[0.8]{\includegraphics{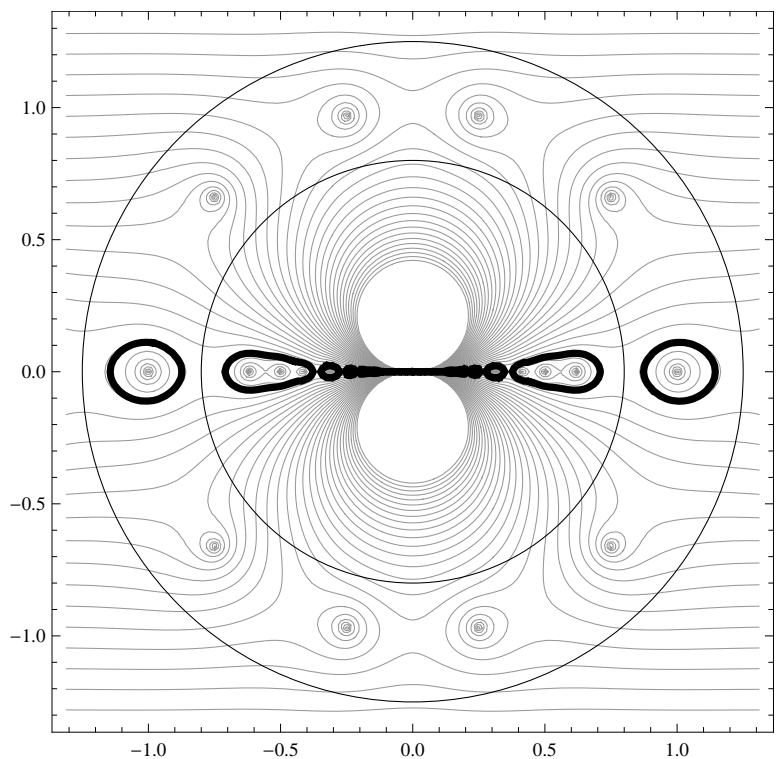}}
\caption{\sf
A contour plot of $\log|S(z)|$.  The darker contour is the set where
$\log|S(z)| = \log(1.5)$.
} \label{fig:lemma}
\end{center}
\end{figure}

\begin{lemma} \label{lemma:est}
Let $f$ be a Hecke eigenform of weight $k$ for the full modular group and let $L_f(s)=\sum a_f(n){n^s}$ be its associated L-function.
 Then
for $\sigma \ge 3k/4$, we have 
\begin{eqnarray}\label{eqn:est1}
|L_f(\sigma)-1|\le 4 \times 2^{-k/4}
\end{eqnarray}
and
for $\sigma $ an integer with $\sigma\ge k/2$, we have 
\begin{eqnarray} \label{eqn:est2}
L_f(\sigma) \le 2k^{1/2} \log 2k+1.
\end{eqnarray}
\end{lemma}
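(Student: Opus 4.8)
The plan is to derive both inequalities from Deligne's bound (the Ramanujan--Petersson conjecture quoted in the introduction), which in the normalization of Section~2 reads $|a(n)|\le d(n)\,n^{(k-1)/2}$, together with the fact that a Hecke eigenform has $a(1)=1$; here $d(n)$ denotes the number of divisors of $n$, and I use throughout that a nonzero cusp form for $\Gamma$ has $k\ge 12$. The first estimate (\ref{eqn:est1}) is then elementary. Since $a(1)=1$ and $\sigma\ge 3k/4$,
\begin{equation*}
|L_f(\sigma)-1|\le \sum_{n\ge 2}|a(n)|\,n^{-\sigma}\le \sum_{n\ge 2} d(n)\,n^{(k-1)/2-\sigma}\le \sum_{n\ge 2} d(n)\,n^{-k/4-1/2}.
\end{equation*}
Writing $n^{-k/4}=2^{-k/4}(n/2)^{-k/4}$ and bounding $(n/2)^{-k/4}\le (n/2)^{-3}=8n^{-3}$ for $n\ge 2$ and $k\ge 12$, I would majorize the tail by $2^{-k/4}\cdot 8\sum_{n\ge 2}d(n)n^{-7/2}=2^{-k/4}\cdot 8\big(\zeta(7/2)^2-1\big)$, and a numerical check gives $8(\zeta(7/2)^2-1)<4$, which is (\ref{eqn:est1}).

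For (\ref{eqn:est2}) I would separate an easy range from the genuinely hard central point. Put $s=\sigma-(k-1)/2$; an integer $\sigma\ge k/2+1$ gives $s\ge 3/2$, so the Dirichlet series converges absolutely and $|L_f(\sigma)|\le\sum_n d(n)n^{-s}=\zeta(s)^2\le\zeta(3/2)^2<7$, well within the claimed bound. The only remaining case is $\sigma=k/2$ (recall $k$ is even). If $k\equiv 2\pmod 4$ the functional equation forces $L_f(k/2)=0$, so I may assume $k\equiv 0\pmod 4$, where the series no longer converges absolutely. Here I would use the integral representation coming from the functional equation: with $g(y)=f(iy)=\sum_n a(n)e^{-2\pi ny}$ and the modular relation $g(1/y)=(-1)^{k/2}y^k g(y)$, the standard derivation gives $(2\pi)^{-s}\Gamma(s)L_f(s)=\int_1^\infty g(y)\big(y^{s-1}+(-1)^{k/2}y^{k-1-s}\big)\,dy$, which at $s=k/2$ yields
\begin{equation*}
L_f(k/2)=\frac{2(2\pi)^{k/2}}{\Gamma(k/2)}\int_1^\infty g(y)\,y^{k/2-1}\,dy.
\end{equation*}
Applying Deligne's bound to $g$ and integrating term by term then gives
\begin{equation*}
|L_f(k/2)|\le 2\sum_{n\ge 1} d(n)\,n^{-1/2}\,\frac{\Gamma(k/2,\,2\pi n)}{\Gamma(k/2)},\qquad \Gamma(k/2,x)=\int_x^\infty e^{-t}t^{k/2-1}\,dt.
\end{equation*}

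The heart of the matter is estimating this sum. Since $k/2$ is an integer, the ratio $\Gamma(k/2,2\pi n)/\Gamma(k/2)=e^{-2\pi n}\sum_{j=0}^{k/2-1}(2\pi n)^j/j!$ is a Poisson tail: it is at most $1$ and falls from essentially $1$ to essentially $0$ as $n$ crosses the transition point $n\approx k/(4\pi)$. Bounding this ratio by $1$ for $n\le k/(4\pi)$ and by an explicit Chernoff/Stirling estimate for the exponentially small tail when $n>k/(4\pi)$, the sum is dominated by $\sum_{n\le k/(4\pi)}d(n)n^{-1/2}$, which by $\sum_{n\le x}d(n)\le x\log x+O(x)$ and Abel summation is at most $2\sqrt{k/(4\pi)}\,\log(k/(4\pi))$ up to lower-order terms. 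Multiplying by the leading factor $2$ produces a bound of size $\tfrac{2}{\sqrt\pi}\sqrt k\,\log(k/4\pi)$, comfortably below $2k^{1/2}\log 2k$, with the remaining slack (and the additive $1$) absorbing the transition-region corrections and the $\sigma\ge k/2+1$ range.

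I expect the transition region $n\approx k/(4\pi)$ to be the main obstacle: one must control the incomplete-Gamma ratio there with explicit constants, which is precisely an explicit form of the convexity bound for the central value, and pair it with an explicit partial-sum estimate for the divisor function. Away from that region the terms are either bounded trivially by the divisor sum or are exponentially small, so the only real analytic work is making the Poisson-tail and Stirling estimates quantitative enough to keep the constant below the stated $2$.
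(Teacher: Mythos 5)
Your proof of (\ref{eqn:est1}) is complete and correct, and is only a cosmetic variant of the paper's: the paper bounds the tail by $\zeta(k/4)^2-1$ directly, while you factor out $2^{-k/4}$ and compare with $8(\zeta(7/2)^2-1)<4$; both are elementary consequences of Deligne's bound in the region of absolute convergence. Likewise your treatment of $\sigma\ge k/2+1$ via $\zeta(3/2)^2<7$ and the remark that $k\equiv 2\bmod 4$ forces $L_f(k/2)=0$ agree with the paper, and you arrive at exactly the paper's central-point inequality $|L_f(k/2)|\le 2\sum_{n\ge 1} d(n)n^{-1/2}\,\Gamma(k/2,2\pi n)/\Gamma(k/2)$.

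The genuine gap is in how you estimate that sum. You cut at the Poisson transition point $n\approx k/(4\pi)$, bound the incomplete-gamma ratio by $1$ below it, and assert that above it the ratio is exponentially small by ``an explicit Chernoff/Stirling estimate.'' But just above $n=k/(4\pi)$ the ratio is close to $\tfrac12$, not exponentially small; the decay to negligibility happens over a window of width $\asymp\sqrt k$ in $2\pi n$, and controlling that window with explicit constants (which you yourself flag as ``the only real analytic work'') is precisely what your write-up does not do --- as it stands, the bound $L_f(k/2)\le 2k^{1/2}\log 2k+1$ rests on an unproved quantitative tail estimate. The paper avoids the transition region entirely by the cruder cut at $n=k$, far beyond $k/(4\pi)$: for $n\le k$ it bounds the ratio by $1$ (completing the integral down to $0$) and uses the hyperbola-method estimate $\sum_{n\le k}d(n)n^{-1/2}\le 2k^{1/2}\log 2k$; for $n>k$ it writes $e^{-x}=e^{-x/2}e^{-x/2}$, uses $e^{-x/2}\le e^{-\pi n}$ on the range of integration, so that $\int_{2\pi n}^\infty e^{-x/2}x^{k/2-1}\,dx=2^{k/2}\int_{\pi n}^\infty e^{-x}x^{k/2-1}\,dx\le 2^{k/2}\Gamma(k/2)$, and with $d(n)\le 2\sqrt n$ the whole tail is $\le 4\cdot 2^{k/2}\sum_{n>k}e^{-\pi n}<1$, accounting for the additive $1$ in (\ref{eqn:est2}). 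The fix for your argument is simply to move the cut from $k/(4\pi)$ to $k$: you lose the constant-level savings you were aiming for, but the main term $\sum_{n\le k}d(n)n^{-1/2}$ still fits under $2k^{1/2}\log 2k$, and the delicate incomplete-gamma analysis becomes two trivial lines.
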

\begin{proof}
If $\sigma \ge 3k/4$ then we are in the region of absolute convergence. By Deligne's Theorem we have 
\begin{eqnarray*} 
|L_f(\sigma)-1|\le \sum_{n=2}^\infty \frac{d(n)}{n^{\sigma-(k-1)/2}}\le \sum_{n=2}^\infty \frac{d(n)}{n^{k/4}}=\zeta(k/4)^2-1\le 2(\zeta(k/4)-1).
\end{eqnarray*}
We have
\begin{eqnarray*}
\zeta(k/4)-1=2^{-k/4}+\sum_{n=3}^\infty n^{-k/4}\le 2^{-k/4}+ \int_{2}^\infty u^{-k/4} ~du \le 2 \times 2^{-k/4} 
\end{eqnarray*}
for $k\ge 12$. This proves (\ref{eqn:est1}).

Next, if $\sigma\ge k/2+1$ we estimate $L_f(\sigma)$ trivially by $L_f(\sigma)\le \zeta(3/2)^2<7$. If $k/2\le \sigma \le k/2+1$, then 
 using standard methods, we have for $\sigma=k/2$,
\begin{eqnarray*}
  \Gamma(k/2) L_f(k/2) = 2  \sum_{n=1}^\infty \frac{a_f(n)}{n^{k/2}}\int_{2\pi n }^\infty e^{-x}x^{k/2} \frac{dx}{x}.
\end{eqnarray*}
Thus
\begin{eqnarray*}
|L_f(\sigma)|\le 
   2\Gamma(k/2)^{-1}\sum_{n=1}^\infty  \frac{d(n)}{n^{1/2}} \int_{2\pi n }^\infty e^{-x}x^{k/2}
 \frac{dx}{x}.
\end{eqnarray*}
We split the sum over $n$ at $k$. The terms with $n\le k$ are
\begin{eqnarray*}
\le 
   \sum_{n\le k}  \frac{d(n)}{n^{1/2}} 
\end{eqnarray*}
as is seen by completing the integrals down to 0.
Now
\begin{eqnarray*}
\sum_{n\le k} \frac{d(n)}{n^{1/2}}=\sum_{mn\le k}\frac{1}{(mn)^{1/2}}\le \sum_{m\le k}\frac{1}{m^{1/2}}\int_0^{k/m}u^{-1/2}~du
=2k^{1/2}\sum_{m\le k}\frac 1 m \le  2k^{1/2} \log 2k
\end{eqnarray*}
 for $k\ge 5$.  The tail of the series is 
 \begin{eqnarray*}&&
 =2\Gamma(k/2)^{-1}\sum_{n=k+1}^\infty\frac{d(n)}{\sqrt{n}}\int_{2\pi n}^\infty e^{-x}x^{k/2}\frac{dx}{x}\\
 &&\le 2\Gamma(k/2)^{-1}\sum_{n=k+1}^\infty\frac{d(n)}{\sqrt{n}} e^{-\pi n} \int_{2\pi n}^\infty e^{-x/2}
 x^{k/2}\frac{dx}{x}.\end{eqnarray*}
The integral is 
$$=2^{k/2} \int_{\pi n}^\infty e^{-x} x^{k/2} \frac{dx}{x}\le 2^{k/2}\Gamma(k/2).$$
Using $d(n)\le 2 \sqrt{n}$ we have that the tail is
$$\le 4\times 2^{k/2}\sum_{n=k+1}^\infty e^{-\pi n}\le 4\times 2^{k/2} e^{-\pi k}<1.$$
Note that $2k^{1/2}\log 2k +1 >7 $ for $k\ge 3$. The proof is complete.
\end{proof}

\begin{lemma} \label{lemma:est2}
Let $q_f$ be as in (\ref{eqn:qf}). Then, for $z \le 5/4$ and $k\ge 80$ we have 
$$|\sin 2 \pi z -q_f(z)|\le \frac{1}{100}.$$
\end{lemma}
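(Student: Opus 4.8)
The plan is to estimate the difference directly after splitting it into three natural pieces. Writing $w=k-2$, $M=[(w-6)/4]$, and recalling $\sin(2\pi z)=\sum_{m\ge 0}\frac{(-1)^m(2\pi z)^{2m+1}}{(2m+1)!}$, I would subtract (\ref{eqn:qf}) term by term to get
$$\sin(2\pi z)-q_f(z)=\sum_{m=0}^{M}\frac{(-1)^m(2\pi z)^{2m+1}}{(2m+1)!}\bigl(1-L_f(w-2m)\bigr)+\sum_{m>M}\frac{(-1)^m(2\pi z)^{2m+1}}{(2m+1)!}-\frac{L_f\big(\tfrac{w+2}{2}\big)(2\pi z)^{w/2}}{2\big(\tfrac w2\big)!},$$
which I will call $(\mathrm I)$, $(\mathrm{II})$, $(\mathrm{III})$. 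The guiding observation is that $(\mathrm{II})$ and $(\mathrm{III})$ involve only monomials of degree $\gtrsim w/2$, whose Taylor factors $\frac{(5\pi/2)^n}{n!}$ are super-exponentially small once $n\gg e\cdot 5\pi/2$, while $(\mathrm I)$ carries the real content.

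For $(\mathrm{II})$ and $(\mathrm{III})$ I would bound crudely. The smallest exponent appearing in $(\mathrm{II})$ is $2M+3\ge k/2-2$, so for $|z|\le 5/4$ one has $|(\mathrm{II})|\le\sum_{n\ge k/2-2}\frac{(5\pi/2)^n}{n!}$, which is below $10^{-10}$ for $k\ge 80$ since the factorial overwhelms $(5\pi/2)^n$ in this range. For $(\mathrm{III})$ I would invoke estimate (\ref{eqn:est2}) of Lemma~\ref{lemma:est}, namely $|L_f(k/2)|\le 2k^{1/2}\log 2k+1$, and multiply by the tiny factor $\frac{(5\pi/2)^{w/2}}{2(w/2)!}$; this is below $10^{-9}$ for $k\ge 80$. (When $k\equiv 2\bmod 4$ the term $(\mathrm{III})$ is absent, since $L_f(k/2)=0$.) This single central term is the only place estimate (\ref{eqn:est2}) is needed.

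The crux is $(\mathrm I)$. The subtle point is that for the terms near the top of the sum $\sigma_m:=w-2m$ approaches $k/2$, where $L_f(\sigma_m)$ is no longer close to $1$ and can be as large as $k^{1/2}\log k$; using the blanket bound of (\ref{eqn:est1}) only for $\sigma_m\ge 3k/4$ and the crude $|1-L_f|\le 1+|L_f|$ beyond that loses a factor of order $k^{1/2}\log k$ against Taylor coefficients that are not yet small, and fails. The resolution is to use the absolute-convergence estimate underlying (\ref{eqn:est1}) but with the sharp exponent. Every term in $(\mathrm I)$ has $\sigma_m>(k+1)/2$; in fact one checks that at the top index $m=M$ one has $t_m:=\sigma_m-\tfrac{k-1}{2}\ge\tfrac52$, so by Deligne's bound $|a_f(n)|\le d(n)n^{(k-1)/2}$ and absolute convergence,
$$|1-L_f(\sigma_m)|\le\sum_{n\ge 2}\frac{d(n)}{n^{t_m}}=\zeta(t_m)^2-1\le 8\cdot 2^{-t_m},\qquad t_m=\tfrac k2-2m-\tfrac32 .$$
Since $2^{-t_m}=2^{3/2}\,2^{-k/2}\,4^m$, the decay of $L_f(\sigma_m)-1$ exactly compensates the growth; substituting and using the identity $(2\pi|z|)^{2m+1}4^m=\tfrac12(4\pi|z|)^{2m+1}$ gives
$$|(\mathrm I)|\le 8\cdot 2^{3/2}\,2^{-k/2}\sum_{m\ge 0}\frac{(2\pi|z|)^{2m+1}4^m}{(2m+1)!}=4\cdot 2^{3/2}\,2^{-k/2}\sinh(4\pi|z|)\le 12\cdot 2^{-k/2}\sinh(5\pi),$$
which is below $10^{-4}$ for $k\ge 80$ and decays with $k$. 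Summing the three pieces yields $|\sin 2\pi z-q_f(z)|<\tfrac1{100}$ with room to spare.

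I expect the main obstacle to be exactly the bookkeeping around these central terms: one must verify that the summation index in $(\mathrm I)$ never reaches $\sigma_m=k/2$, so that the convergence bound with $t_m\ge\tfrac52$ is legitimate uniformly, and that the lone genuinely central contribution $L_f(k/2)$ has been peeled off into $(\mathrm{III})$ where (\ref{eqn:est2}) applies. Once the threshold $3k/4$ of (\ref{eqn:est1}) is replaced by the term-by-term exponent $t_m$, everything else reduces to routine factorial estimates.
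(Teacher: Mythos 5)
Your proposal is correct, and it takes a genuinely different route from the paper's proof. The paper splits the range of $m$ at $w/8$ and $w/4$: for $m\le w/8$ it applies the blanket estimate (\ref{eqn:est1}) (valid in the $\sigma\ge 3k/4$ regime), for the middle range $w/8<m<w/4$ it uses (\ref{eqn:est2}) crudely in the form $|L_f(w-2m)|+1\le 2\sqrt{k}\log 2k+2$ against the Taylor factors, and for $m>w/4$ only the sine tail survives; the resulting bound is dominated by $\Sigma_1\le 4\cdot 2^{-k/4}e^{5\pi/2}$, which at $k=80$ is already $\approx 0.0098$, so the paper's numerics are razor-thin at the cutoff. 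You instead split by structural source --- main-sum terms $(\mathrm I)$, sine tail $(\mathrm{II})$, central term $(\mathrm{III})$ --- and the key new ingredient is your term-by-term Deligne estimate $|1-L_f(\sigma_m)|\le\zeta(t_m)^2-1\le 8\cdot 2^{-t_m}$ with the exact exponent $t_m=k/2-2m-\tfrac32$. This is legitimate: since the sum in (\ref{eqn:qf}) stops at $M=[(w-6)/4]$, every $\sigma_m=w-2m$ satisfies $\sigma_m\ge k/2+2$, hence $t_m\ge\tfrac52$, and the inequality $\zeta(t)^2-1\le 8\cdot 2^{-t}$ holds there because $\zeta(t)-1\le 2^{-t}\bigl(1+\tfrac{2}{t-1}\bigr)\le\tfrac73\,2^{-t}$ and $\zeta(t)+1\le\zeta(5/2)+1<2.4$. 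Your identity $2^{-t_m}=2^{3/2}2^{-k/2}4^m$ and $(2\pi|z|)^{2m+1}4^m=\tfrac12(4\pi|z|)^{2m+1}$ then correctly give $|(\mathrm I)|\le 12\cdot 2^{-k/2}\sinh(5\pi)\approx 3.6\times 10^{-5}$ at $k=80$; the bounds on $(\mathrm{II})$ (smallest exponent $2M+3\ge k/2-2$ in both residue classes of $k$ mod $4$) and $(\mathrm{III})$ (where (\ref{eqn:est2}) applies since $\sigma=k/2$ is an integer $\ge k/2$) also check out. What each approach buys: the paper's is shorter and reuses Lemma \ref{lemma:est} wholesale; yours is sharper (total error $O(2^{-k/2})$ rather than $O(2^{-k/4})$) and more robust. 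Indeed, it quietly repairs a weak step: in the paper the combined tail $\Sigma_2+\Sigma_3$ is bounded using the exponent $k/2+1$ even though that sum starts at $m>w/8$, whose smallest Taylor exponent is only about $k/4$; as printed that deduction does not follow, and at $k=80$ the middle-range terms multiplied by $2\sqrt{k}\log 2k$ are not numerically negligible. Your bookkeeping --- isolating the lone central value $L_f(k/2)$ and letting the geometric growth $4^m$ be absorbed into $\sinh(4\pi|z|)$ --- avoids this issue entirely and proves the lemma with ample margin.
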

\begin{proof}
We have 
$$\sin 2\pi z =\sum_{m=0}^\infty (-1)^m\frac{(2\pi z)^{2m+1}}{(2m+1)!}$$
and
$$q_f(z)= \sum_{m=0}^{[(w-6)/4]}  \frac{(-1)^{m}(2\pi z)^{2m+1}}{(2m+1)! } L_f(w-2m) +\frac{L_f\big(\frac{w+2}{2}\big) (2\pi z)^{\frac{w}2}}{2\big(\frac{w}{2}\big)!}.$$
Thus,
\begin{eqnarray*}
|\sin 2 \pi z -q_f(z)|&\le&\sum_{m\le w/8}\frac{(5\pi/2)^{2m+1}}{(2m+1)!} |L_f(w-2m)-1|\\
&&\qquad  +\sum_{w/8<m<w/4}\frac{(5\pi/2)^{2m+1}}{(2m+1)!} 
(|L_f(w-2m)|+1)\\
&&\qquad \qquad + \sum_{m>w/4}\frac{(5\pi/2)^{2m+1}}{(2m+1)!}\\ 
&=&\Sigma_1+\Sigma_2+\Sigma_3,
\end{eqnarray*} 
say. Now by Lemma \ref{lemma:est} we have
\begin{eqnarray*}
\Sigma_1 \le 4\times 2^{-k/4} \sum_{m\le w/8}\frac{(5\pi/2)^{2m+1}}{(2m+1)!}\le 4\times 2^{-k/4}\times  e^{5\pi/2}.
\end{eqnarray*}
We can combine estimates for $\Sigma_2$ and $\Sigma_3$. Again using Lemma \ref{lemma:est} we have
\begin{eqnarray*}
\Sigma_2 +\Sigma_3 \le (2\sqrt{k}\log 2k +2)\sum_{w/8<m}\frac{(5\pi/2)^{2m+1}}{(2m+1)!}.
\end{eqnarray*}
We can bound the sum using
\begin{eqnarray*}
\sum_{m=r+1}^\infty \frac{x^m}{m!}&=&\frac{x^{r+1}}{(r+1)!}\left(1+\frac{x}{r+2}+\frac{x^2}{(r+2)(r+3)}+\dots\right)\\
&\le& \frac{x^{r+1}}{(r+1)!}\frac{1}{(1-\frac{x}{r+1})}=\frac{x^{r+1}}{r!(r+1-x)}< \frac{(ex)^{r+1}}{r^r(r+1-x)},
\end{eqnarray*}
the last line uses $r!>(r/e)^r$.  Using this above we have 
\begin{eqnarray*}
\Sigma_2 +\Sigma_3 \le (2\sqrt{k}\log 2k +2)\frac{(5\pi e/2)^{k/2+1}}{(k/2)^{k/2}(k/2+1-5\pi/2)}.
\end{eqnarray*}
For $k\ge 80$ we have $\Sigma_1+\Sigma_2+\Sigma_3<0.01$.
\end{proof}

\begin{lemma}
If $k\ge 80$, then the function 
$$Q_f(z):=q_f(z)-q_f(1/z)$$
has at most 10 zeros in the annulus $A$.
\end{lemma}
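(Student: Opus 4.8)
The plan is to prove the statement by a Rouch\'e-type comparison of $Q_f$ with the function $S(z)=\sin(2\pi z)-\sin(2\pi/z)$ from Lemma~\ref{lemma:S}, whose zero count in $A$ is already known to be exactly $10$. First I would record that $Q_f$ is holomorphic on a neighborhood of the closed annulus $\overline{A}$: since $q_f$ is a polynomial it is entire, and the composite $q_f(1/z)$ is holomorphic away from $z=0$, a point which lies outside $A$. Thus both $S$ and $Q_f$ are holomorphic and (as we will check) nonvanishing on $\partial A$, so the argument principle applies to the annular region.

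The key observation is that $A$ is invariant under the inversion $z\mapsto 1/z$, and for every $z\in\overline{A}$ one has simultaneously $|z|\le 5/4$ and $|1/z|\le 5/4$ (the latter because $|z|\ge 4/5$). Hence Lemma~\ref{lemma:est2} may be applied to \emph{both} arguments, yielding
\begin{equation*}
|\sin 2\pi z - q_f(z)|\le \tfrac{1}{100}
\qquad\text{and}\qquad
|\sin(2\pi/z) - q_f(1/z)|\le \tfrac{1}{100}
\end{equation*}
for all $z\in\overline{A}$ when $k\ge 80$. Writing $S(z)-Q_f(z)=\bigl(\sin 2\pi z-q_f(z)\bigr)-\bigl(\sin(2\pi/z)-q_f(1/z)\bigr)$ and applying the triangle inequality gives $|S(z)-Q_f(z)|\le \tfrac{1}{50}$ throughout $\overline{A}$.

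With this in hand the comparison is immediate. On the boundary $\partial A$, which consists of the two circles $|z|=4/5$ and $|z|=5/4$, Lemma~\ref{lemma:S} guarantees $|S(z)|>1$, so that
\begin{equation*}
|S(z)-Q_f(z)|\le \tfrac{1}{50}<1<|S(z)|
\end{equation*}
holds on all of $\partial A$. I would then invoke Rouch\'e's theorem for the annular region $A$: since $|S-Q_f|<|S|$ on the entire boundary, $S$ and $Q_f$ have the same number of zeros in $A$, counted with multiplicity. By Lemma~\ref{lemma:S} that number is $10$, so $Q_f$ has exactly (in particular at most) $10$ zeros in $A$.

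The one point requiring genuine care is the correct use of Rouch\'e's theorem on a doubly connected region: unlike the disk case, the boundary $\partial A$ has two components, and the hypothesis $|S-Q_f|<|S|$ must be verified on \emph{both} the inner and outer circles. This is precisely why the invariance of $A$ under $z\mapsto 1/z$ and the two-sided application of Lemma~\ref{lemma:est2} are essential—they are what let a single estimate, valid for $|z|\le 5/4$, control $Q_f$ on the inner circle as well. Everything else is a routine triangle-inequality bookkeeping step.
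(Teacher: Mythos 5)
Your proof is correct and is exactly the argument the paper intends: the paper's own ``proof'' is the single sentence that the lemma follows from Rouch\'e's theorem together with Lemmas~\ref{lemma:S} and~\ref{lemma:est2}, and you have simply supplied the routine details (two-sided application of the estimate via the inversion-invariance of $A$, the triangle inequality bound $1/50 < 1 < |S|$ on both boundary circles, and Rouch\'e on the doubly connected region). No gaps; your version in fact yields the slightly stronger conclusion that $Q_f$ has exactly $10$ zeros in $A$.
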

This follows from Rouch\'{e}'s theorem using Lemmas \ref{lemma:S} and \ref{lemma:est2}.

\begin{corollary} 
If $k\ge 80$, then
$$\Im q_f(e^{i\theta})$$
has at most 10 zeros in $0\le \theta < 2\pi$.  Moreover, $\Im q_f(e^{i\theta})=0$ at $\theta=0$ and at $\theta=\pi$.
\end{corollary}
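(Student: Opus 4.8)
The plan is to reduce the count of real zeros of $\Im q_f(e^{i\theta})$ to the count of complex zeros of $Q_f(z)$ that is already controlled by the preceding lemma. The key observation is that on the unit circle the reciprocal and the complex conjugate coincide: if $|z|=1$ then $1/z=\bar z$. Since $q_f$ is a polynomial with real coefficients (as noted just after (\ref{eqn:qf}), because $L_f(s)$ is real on the real axis), we have $q_f(\bar z)=\overline{q_f(z)}$, and therefore $q_f(1/z)=\overline{q_f(z)}$ whenever $|z|=1$.

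Feeding this into the definition $Q_f(z)=q_f(z)-q_f(1/z)$ gives, for $z=e^{i\theta}$,
$$Q_f(e^{i\theta})=q_f(e^{i\theta})-\overline{q_f(e^{i\theta})}=2i\,\Im q_f(e^{i\theta}).$$
Hence a real number $\theta\in[0,2\pi)$ is a zero of $\Im q_f(e^{i\theta})$ if and only if $e^{i\theta}$ is a zero of $Q_f$ lying on the unit circle. Because the unit circle $|z|=1$ is contained in the annulus $A=\{4/5\le|z|\le 5/4\}$, every such zero is among the at-most-ten zeros of $Q_f$ in $A$ furnished by the previous lemma. Consequently $\Im q_f(e^{i\theta})$ has at most $10$ zeros in $[0,2\pi)$, which is the first assertion.

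For the ``Moreover'' statement I would simply evaluate at the two real points of the circle. At $\theta=0$ we have $e^{i\theta}=1$, and since $q_f$ has real coefficients $q_f(1)$ is real, so $\Im q_f(1)=0$; at $\theta=\pi$ we have $e^{i\theta}=-1$, and likewise $q_f(-1)$ is real, so $\Im q_f(-1)=0$. This pins down guaranteed zeros at $\theta=0$ and $\theta=\pi$.

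The only point that deserves a word of care --- though it is not a genuine obstacle --- is the bookkeeping of multiplicities: a priori one must check that a zero of $Q_f$ of order $m$ at $e^{i\theta_0}$ corresponds to a zero of the real function $\theta\mapsto\Im q_f(e^{i\theta})$ of the expected order at $\theta_0$. Writing $z=e^{i\theta}$ and expanding $Q_f$ near $e^{i\theta_0}$ shows the orders agree, since $dz=ie^{i\theta}\,d\theta$ is nonvanishing on the circle. In any case the corollary asserts only an upper bound, so even the crude inequality (number of distinct real zeros) $\le$ (number of zeros of $Q_f$ in $A$, counted with multiplicity) $\le 10$ suffices, and no delicate analysis is required.
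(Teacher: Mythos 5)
Your proof is correct and, for the main count, is exactly the paper's argument: on $|z|=1$ one has $1/z=\bar z$, so the real coefficients of $q_f$ give $Q_f(e^{i\theta})=2i\,\Im q_f(e^{i\theta})$, and the zeros of $\Im q_f(e^{i\theta})$ are absorbed into the at-most-ten zeros of $Q_f$ in the annulus $A\supset\{|z|=1\}$. (Incidentally, your identity is the correct one; the paper writes $\Im q_f(z)=-iQ_f(z)$, which is off by a factor of $2$, though this is immaterial.) For the ``Moreover'' clause you take a genuinely simpler route than the paper: you observe that $q_f(\pm1)$ is real because $q_f$ has real coefficients, so $\Im q_f(\pm1)=0$ trivially --- equivalently, $Q_f(\pm1)=q_f(\pm1)-q_f(\pm1)=0$ automatically since $\pm1$ are fixed by $z\mapsto 1/z$. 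The paper instead deduces the stronger fact $q_f(\pm1)=0$ from the trivial zeros $r_f^-(\pm1)=0$ via the decomposition (\ref{eqn:q}). Your version suffices for the corollary as stated, and shows that the two guaranteed zeros at $\theta=0,\pi$ hold for any real-coefficient $q_f$, independent of modularity; the paper's version buys the extra information $q_f(\pm1)=0$, which is not needed here or in the proof of Theorem \ref{thm80} (which only uses the double zeros of $p_f^-$ at $\pm1$). Your closing remark on multiplicities is harmless and, as you say, unnecessary, since only an upper bound on distinct zeros is claimed.
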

\begin{proof}
If $z$ is on the unit circle, then $\Im q_f(z) = -iQ_f(z)$ so any zero of $\Im q_f(z)$ has to be a zero of $Q_f(z)$. But $Q_f(z)$ has 
at most 10 zeros on the annulus $A$ of which the unit circle is a subset. Since $r_f(\pm1)=0$, we see from (\ref{eqn:q}) that 
$q_f(\pm 1)=0$, so $\Im q_f(e^{i\theta})=0$ for $\theta=0$ and $\theta=\pi$.
\end{proof}

  We now combine these lemmas to prove
  \begin{theorem} \label{thm80}
  
  Let $f$ be a cusp form of weight $k\geq 80$   for $\SL(2,\Z), w=k-2$  and  $p_f^-(z) = q_f(z)+z^wq_f(1/z)$ be its odd period polynomial of degree $w-1$. Then $p_f^-(z)$ has all  but 5 of its zeros on the unit circle. The 5 trivial zeros of $p_f(z)$are at $z=0, 2,-2,1/2,-1/2$. It has double zeros at $z=1,-1.$
  \end{theorem}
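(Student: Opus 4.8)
The plan is to recognize $p_f^-$ as an instance of the function $F_N$ of Lemma~\ref{gen-lemma} and to manufacture zeros on the unit circle until their number, together with the five trivial zeros already located, exhausts the degree. Precisely, I would take $r=q_f$ and $N=w=2M$, so that $F_w(z)=q_f(z)+z^wq_f(1/z)=p_f^-(z)$ with $M=w/2$. Since $\deg p_f^-=w-1$, the polynomial has exactly $w-1$ zeros counted with multiplicity, and it suffices to exhibit at least $w-10$ distinct nontrivial zeros on $|z|=1$: combined with the double zeros at $\pm1$ and the five simple zeros $0,\pm2,\pm\tfrac12$ this already accounts for $(w-10)+4+5=w-1$ zeros, forcing every remaining zero onto the circle.

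To produce the circle zeros I would combine the Corollary with the two alternatives of Lemma~\ref{gen-lemma}. Set $I(\th)=\Im q_f(e^{i\th})$; because $q_f$ has real coefficients $I$ is odd, and by the Corollary it has at most $10$ zeros on $[0,2\pi)$, two of which sit at $\th=0$ and $\th=\pi$. The endpoints $\th_j=\tfrac{(2j+1)\pi}{w}$ of the arcs $\cI_j$ are never integer multiples of $\pi$ (here $w$ is even while $2j+1$ is odd), so the special points $z=\pm1$ fall at the \emph{midpoints} of the two arcs on which $I$ vanishes at $\th=0,\pi$; these two arcs are thereby quarantined from the construction. Writing $a$ and $b$ for the number of zeros of $I$ occurring at arc endpoints and in arc interiors respectively, with $a+b\le 10$, part~(a) furnishes a zero of $p_f^-$ at each of the $a$ endpoints, while part~(b) furnishes an interior zero in each arc on which $I$ is nowhere zero. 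An endpoint zero disables the two arcs abutting it and an interior zero disables one arc, so at most $2a+b$ of the $w$ arcs are unavailable for part~(b); the total number of zeros obtained is therefore at least $(w-2a-b)+a=w-(a+b)\ge w-10$.

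These $\ge w-10$ zeros are pairwise distinct (interior zeros lie in distinct arcs, endpoint zeros at distinct endpoints) and none equals $\pm1$, since $z=\pm1$ lie in the two quarantined arcs and are not endpoints; they are likewise distinct from the off-circle trivial zeros $0,\pm2,\pm\tfrac12$. Hence $p_f^-$ has at least $w-10$ distinct nontrivial unit-circle zeros. Adding the double zeros at $\pm1$ (the lemma on trivial zeros shows the period polynomial and its derivative both vanish there) and the five trivial zeros $0,\pm2,\pm\tfrac12$ off the circle gives at least $(w-10)+4+5=w-1$ zeros counted with multiplicity. As this equals $\deg p_f^-$, every inequality must be an equality: there are exactly five zeros off the circle and all the others, including the double zeros at $\pm1$, lie on $|z|=1$.

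The analytic heart of the matter — the bound of $10$ on the number of zeros of $I$ — is already supplied by the Corollary (through Lemmas~\ref{lemma:S} and~\ref{lemma:est2} and Rouch\'e), so the remaining difficulty is purely combinatorial bookkeeping. The main point needing care is the reconciliation of multiplicities against the degree: one must verify that the $\th_j$ avoid $0$ and $\pi$ so the arcs carrying $z=\pm1$ are correctly excluded, that the zeros produced by parts~(a) and~(b) are genuinely distinct and nontrivial, and that the multiplicity $2$ at each of $\pm1$ is counted so that the tally closes at exactly $w-1$. I expect this final counting step, rather than any analytic estimate, to be the only delicate part.
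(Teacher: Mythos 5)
Your proposal is correct and follows essentially the same route as the paper's proof: apply Lemma~\ref{gen-lemma} with $r=q_f$ and $N=w$, invoke the Corollary's bound of at most $10$ zeros of $\Im q_f(e^{i\theta})$ to produce at least $w-10$ zeros on the unit circle, add back the double zeros at $\pm 1$, and close the count against the degree $w-1$ together with the five trivial off-circle zeros. Your $a$/$b$ bookkeeping for endpoint versus interior zeros of $I$ is in fact slightly more careful than the paper's own write-up, which tacitly relies on part (a) of the lemma for the same purpose without noting that an endpoint zero of $I$ spoils two adjacent arcs.
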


  \begin{proof}

  First recall that we have shown that each period polynomial has simple zeros at $z=0, 2,-2,1/2,-1/2$  and double zeros at $1,-1$. To prove that the rest of the zeros are on the unit circle we
  let $r(z)=q_f(z)$ and $N=w=k-2$ in  Lemma \ref{gen-lemma}.

  By the Corollary there are 10 zeros of $\Im(q_f(e^{i\th}))$ in  the interval $[0,2\pi)$ and hence by  part $b)$ of Lemma \ref{gen-lemma} for each of the $N-10$ intervals  among the N intervals $\cI_j= [\frac{\pi}{2M}+\frac{\pi}{M}j, \frac{\pi}{2M}+\frac{\pi}{M}(j+1)]$, $j=0, N-1$  for which $\Im(q_f(e^{i\th}))\neq 0$,
  $p_f(e^{i\th})=0$    for some
  $\th\in \cI_j$ This gives at least $N-10$ zeros on the unit circle. Among the 10 discarded intervals in which
  $\Im(q_f(e^{i\th})$  vanish,    we have also excluded the intervals that contain $\th=0$ and $\th=\pi$  where   $p_f(z)$ has double zeros. Hence we have at least $N-10+4=w-6$ zeros on the unit circle. Since the degree of $p_f(z)$ is $w-1$ together with the 5 zeros at   $z=0, 2,-2,1/2,-1/2$,  this covers all the zeros  and finishes the proof of the theorem.
    \end{proof}
    
    Finally   Theorem \ref{mainth} follows from Theorem \ref{thm80} together with the fact that for the  weights $k\leq 80$ the statement  can be verified numerically.

\end{document}